\let\originalforall=\forall
\renewcommand{\forall}{\mathop{\vcenter{\hbox{\Large$\originalforall$}}}}
\let\originalexists=\exists
\renewcommand{\exists}{\mathop{\vcenter{\hbox{\Large$\originalexists$}}}}
\newtheorem{thm}{Theorem}[]
\newtheorem{lem}[thm]{Lemma}
\newtheorem{prop}[thm]{Proposition}
\newtheorem{de}[thm]{Definiton}
\begin{document}

\baselineskip=17pt

\title{The uniform invariant approximation property for compact groups}

\author{Przemysław Ohrysko\thanks{The research of this author has been supported by National Science Centre, Poland grant no. 2014/15/N/ST1/02124}\\
Institute of Mathematics\\
Polish Academy of Sciences\\
Śniadeckich 8\\
00-656 Warszawa, Poland\\
E-mail: pohrysko@impan.com\\}

\maketitle

\renewcommand{\thefootnote}{}

\footnote{2010 \emph{Mathematics Subject Classification}: Primary 43A20; Secondary 43A25.}

\footnote{\emph{Key words and phrases}: uniform invariant approximation property, Fourier coefficients, convolution algebra, compact group.}

\renewcommand{\thefootnote}{\arabic{footnote}}
\setcounter{footnote}{0}

\begin{abstract}
In this short note we give a proof of the refined version of the uniform invariant approximation property for compact (non-commutative) groups following the Bourgain's approach (\cite{b}).
\end{abstract}
\section{Introduction}
We shall use the following notation: $G$ will stand for a compact group with the normalized Haar measure $m$ and the dual object $\Sigma$ (consisting of equivalence classes of continuous irreducible unitary representations), $L^{p}(G)$ are the usual Banach spaces of $p$-integrable functions with respect to $m$ and $M(G)$ is the convolution algebra of all complex-valued Borel regular measures endowed with the total variation norm. For $f\in L^{1}(G)$ we write $\widehat{f}(\sigma)$, $\sigma\in\Sigma$ for a matrix defined as follows
\begin{gather*}
\widehat{f}(\sigma)=\int_{G}\sigma(x^{-1})f(x)dm(x)=\\
=\int_{G}U_{x^{-1}}^{(\sigma)}f(x)dm(x)\text{ where $\sigma(x)=U_{x}^{(\sigma)}$ for all $x\in G$}.
\end{gather*}
For every $\sigma\in\Sigma$ let $d_{\sigma}$ denote the dimension (necessarily finite) of the Hilbert space $H_{\sigma}$ on which $\sigma$ acts and let $\zeta_{1}^{(\sigma)},\ldots,\zeta_{d_{\sigma}}^{(\sigma)}$ be a fixed orthonormal basis of $H_{\sigma}$. With $\sigma\in\Sigma$ and $j,k\in\{1,\ldots,d_{\sigma}\}$ we associate a coordinate function (coefficient of the representation) defined by the formula:
\begin{equation*}
u_{jk}(x)=<U_{x}^{(\sigma)}\zeta_{j}^{(\sigma)},\zeta_{k}^{(\sigma)}>.
\end{equation*}
For $y\in G$ we write $(l_{y}f)(x)=f(y^{-1}x)$ and $(r_{y}f)(x)=f(xy)$ for $f\in L^{1}(G)$, $x\in G$. A linear operator $T:X\rightarrow X$ where $X=C(G)$ or $X=L^{p}(G)$, $1\leq p<\infty$ is called \textit{invariant} if for every $y,z\in G$ we have $r_{y}l_{z}T=Tr_{y}l_{z}$.
Our main reference for harmonic analysis on compact groups is the first chapter of \cite{hr}.
\\
The uniform invariant approximation property for a wide class (\textit{translation invariant regular Banach spaces} in the terminology from \cite{k}, the prototypical examples are $L^{p}(G)$ spaces for $1\leq p<\infty$) of function spaces on a compact group $G$ is equivalent to the following theorem (see \cite{k} for details).
\begin{thm}
For every $k>1$ there exists a positive sequence $q_{k}(r)$ such that for every finite set $R\subset\Sigma$ there exists a central function $g\in L^{1}(G)$ such that:
\begin{enumerate}
  \item $\widehat{g}(\sigma)=I_{d_{\sigma}}$ for $\sigma\in R$,
  \item $\|g\|_{1}\leq k$,
  \item $v(\mathrm{supp}\widehat{g})\leq q_{k}(v(R))$ where for $S\subset\Sigma$ we put $v(S):=\sum_{\sigma\in S}d_{\sigma}^{2}$.
\end{enumerate}
\end{thm}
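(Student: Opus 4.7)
The plan is to reduce the problem to a construction on central functions and then apply Bourgain's iterative truncation scheme as in \cite{b}. By Schur's lemma, any central $g\in L^{1}(G)$ has scalar Fourier transform $\widehat{g}(\sigma)=c_{\sigma}I_{d_{\sigma}}$, and therefore admits the expansion $g=\sum_{\sigma}d_{\sigma}c_{\sigma}\chi_{\sigma}$ in characters. Condition (1) becomes $c_{\sigma}=1$ for $\sigma\in R$. The naive candidate is the Dirichlet-type kernel $D_{R}:=\sum_{\sigma\in R}d_{\sigma}\chi_{\sigma}$, which immediately satisfies (1) and (3) (the Fourier support is exactly $R$), but Parseval gives only $\|D_{R}\|_{1}\leq\|D_{R}\|_{2}=\sqrt{v(R)}$, which may vastly exceed $k$. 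The whole point of the theorem is to trade this excess $L^{1}$-mass for a moderate enlargement of $\mathrm{supp}\,\widehat{g}$.

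I would construct a sequence of central functions $g_{0}:=D_{R},g_{1},g_{2},\ldots$, each satisfying (1), with $\|g_{n+1}\|_{1}\leq\theta\|g_{n}\|_{1}$ for some $\theta=\theta(k)\in(0,1)$, while $v(\mathrm{supp}\,\widehat{g_{n}})$ grows only at a controlled rate. At step $n$, choose a threshold $T_{n}$ proportional to $\|g_{n}\|_{1}$; by Chebyshev the set $A_{n}:=\{x\in G:|g_{n}(x)|>T_{n}\}$ has small Haar measure. Removing the portion $g_{n}\mathbf{1}_{A_{n}}$ shrinks the $L^{1}$-norm substantially while perturbing $\widehat{g_{n}}(\sigma)$ on $R$ only slightly, and the resulting perturbation can be corrected by adding back a small multiple of $D_{R}$, which is cheap in $L^{1}$. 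To stay in the central algebra I would symmetrize via the averaging $g\mapsto\bigl(x\mapsto\int_{G}g(yxy^{-1})\,dm(y)\bigr)$, an operation which does not enlarge $\mathrm{supp}\,\widehat{g}$ and, by Schur, replaces every matrix $\widehat{g}(\sigma)$ by $\tfrac{\mathrm{tr}\,\widehat{g}(\sigma)}{d_{\sigma}}I_{d_{\sigma}}$. After $N=O(\log(\sqrt{v(R)}/k))$ iterations one reaches $\|g_{N}\|_{1}\leq k$.

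The main obstacle is controlling $\mathrm{supp}\,\widehat{g_{n+1}}$, because the sharp cutoff $\mathbf{1}_{A_{n}}$ has uncontrolled Fourier support. Following Bourgain, I would replace it by a polynomial $P_{n}(g_{n},\overline{g_{n}})$ of bounded degree $d_{n}$ approximating a smooth truncation of $|g_{n}|^{2}$, where $d_{n}$ depends only on $T_{n}$ and on an a priori upper bound for $\|g_{n}\|_{\infty}$. Since products of matrix coefficients decompose along tensor products of representations, $\mathrm{supp}\,\widehat{P_{n}(g_{n},\overline{g_{n}})}$ is contained in the set of irreducibles occurring in $d_{n}$-fold tensor products of elements of $\mathrm{supp}\,\widehat{g_{n}}$ and their contragredients; combined with $d_{\pi\otimes\tau}=d_{\pi}d_{\tau}$ and the bound $\sum_{\rho\prec\pi\otimes\tau}d_{\rho}^{2}\leq(d_{\pi}d_{\tau})^{2}$, this yields $v(\mathrm{supp}\,\widehat{g_{n+1}})\leq v(\mathrm{supp}\,\widehat{g_{n}})^{d_{n}}$. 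Chaining these estimates through the iteration produces the admissible function $q_{k}$, which will in general be of iterated-exponential type in $v(R)$. The delicate point is to balance the degree $d_{n}$ against the quality of approximation so that the $L^{1}$-norm genuinely contracts at each step while the support does not explode prematurely; this balancing is precisely the combinatorial content of Bourgain's argument transposed to the non-commutative setting.
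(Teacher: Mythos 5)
Your route is genuinely different from the paper's, but it has a gap at its core. The paper (following Bourgain via \cite{w}) is operator-theoretic: it embeds the $v(R)$-dimensional span of the coordinate functions of $R$ into $l_{\infty}^{N}$ with $N\leq\left(\frac{1+\delta}{\delta}\right)^{2v(R)}$, extends to get $T_{1}:C(G)\rightarrow C(G)$ with $T_{1}|_{R}=Id$ and $\pi_{1}(T_{1})\leq N\frac{1+\delta}{1-\delta}$, averages over two-sided translations to make it invariant, identifies the result as convolution with a central $h\in L^{\infty}(G)$ satisfying $\|h\|_{1}\leq\frac{1+\delta}{1-\delta}$ \emph{and} $\|h\|_{\infty}\leq N\frac{1+\delta}{1-\delta}$, and only then discards the small Fourier coefficients of $h\ast h\ast h$. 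The a priori $L^{\infty}$ bound $\|h\|_{\infty}=\pi_{1}(T_{2})$, delivered by the factorization through $l_{\infty}^{N}$, is the engine that makes the final truncation in the dual cheap; Theorem 1 then follows by taking $\varepsilon=k-1$ and $q_{k}(r)=\left(\frac{c}{k-1}\right)^{2r}$. Your scheme tries to manufacture an $L^{\infty}$ bound by truncating $D_{R}$ in physical space instead, and this is where it breaks.

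Concretely, the step ``removing $g_{n}\mathbf{1}_{A_{n}}$ shrinks the $L^{1}$-norm substantially while perturbing $\widehat{g_{n}}(\sigma)$ on $R$ only slightly'' pulls in two incompatible directions. First, Chebyshev with $T_{n}$ proportional to $\|g_{n}\|_{1}$ makes $m(A_{n})$ bounded, not small, and in any case small measure of $A_{n}$ gives no lower bound on $\int_{A_{n}}|g_{n}|\,dm$: nothing prevents $g_{n}$ from already being bounded by $T_{n}$, in which case the step removes nothing and there is no contraction. Second, and more seriously, the perturbation of each coefficient satisfies only $\|\widehat{g_{n}\mathbf{1}_{A_{n}}}(\sigma)\|\leq\|g_{n}\mathbf{1}_{A_{n}}\|_{1}$, i.e.\ the removed $L^{1}$ mass is exactly the available upper bound for the damage done on $R$; the correction $\sum_{\sigma\in R}\eta_{\sigma}d_{\sigma}\chi_{\sigma}$ costs up to $\max_{\sigma}|\eta_{\sigma}|\sqrt{v(R)}$ in $L^{1}$ by Cauchy--Schwarz, so it is cheap only if $\max_{\sigma}|\eta_{\sigma}|\ll v(R)^{-1/2}$, while a contraction $\|g_{n+1}\|_{1}\leq\theta\|g_{n}\|_{1}$ forces the removed mass to be at least $(1-\theta)\|g_{n}\|_{1}$, which is enormous by comparison. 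You would need the removed piece to be nearly orthogonal to all coordinate functions of $R$, and the construction provides no mechanism for that; the closing sentence of your proposal acknowledges this balancing problem but does not resolve it, and it is precisely the missing proof. (The centrality bookkeeping -- conjugation-averaging, the tensor-product bound $\sum_{\rho}d_{\rho}^{2}\leq(d_{\pi}d_{\tau})^{2}$ -- is fine, and since Theorem 1 only asks for \emph{some} function $q_{k}$, the iterated-exponential support growth would be acceptable here; the $L^{1}$ contraction step is the genuine gap.)
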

The most important question is how $q_{k}(v(R))$ grows with $v(R)$. It was proved in \cite{bp} that for Abelian groups one can take $q_{k}(r)\simeq r^{4r}$, later the estimate was refined (again for commutative groups) by J. Bourgain in \cite{b} to $q_{k}(r)\simeq c^{2r}$ where $c>0$ is an absolute constant. For non-Abelian groups it was proved by J. Krawczyk \cite{k} that the estimate given by Bożejko and Pełczyński holds true. In what follows we will prove that the refined estimate by J. Bourgain is correct also for non-commutative groups by extending the proof presented in \cite{w} to this setting. To be more precise our aim is to prove the following theorem.
\begin{thm}\label{glo}
Let $R\subset\Sigma$ be a finite set. Then for every $\varepsilon>0$ there exists a central function $f\in L^{\infty}(G)$ such that:
\begin{enumerate}
  \item $\widehat{f}(\sigma)=Id_{d_{\sigma}}$ for $\sigma\in R$,
  \item $\|f\|_{1}\leq 1+\varepsilon$,
  \item $v(\mathrm{supp}\widehat{f})\leq \left(\frac{c}{\varepsilon}\right)^{2v(R)}$ where $c>0$ is an absolute constant.
\end{enumerate}
\end{thm}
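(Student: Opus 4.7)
The strategy is to adapt the approach of \cite{w} (a clean exposition of Bourgain's method \cite{b}) to compact non-commutative groups via central functions. Since condition (1) forces $\widehat{f}(\sigma) = I_{d_\sigma}$ for $\sigma \in R$, it is natural to look for $f$ in the central subspace of $L^1(G)$, where every Fourier matrix is a scalar multiple of the identity. Any central $f$ has the expansion $f = \sum_\sigma \lambda_\sigma d_\sigma \chi_\sigma$, with $\chi_\sigma(x)=\mathrm{tr}\,\sigma(x)$, and the three conditions in Theorem \ref{glo} become: $\lambda_\sigma = 1$ for $\sigma \in R$; $\|f\|_1 \leq 1+\varepsilon$; and $\sum_{\lambda_\sigma \neq 0} d_\sigma^2 \leq (c/\varepsilon)^{2v(R)}$. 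The appearance of $v(R)$ (rather than $|R|$) in the exponent reflects that each representation in $R$ should be counted with its natural multiplicity $d_\sigma^2$, corresponding to the matrix-coefficient dimension.

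Next, I would recall the Bourgain--Wolff construction in the Abelian case: given a finite $R\subset\widehat{G}$, one produces a measure of total variation $\leq 1+\varepsilon$, Fourier transform $\equiv 1$ on $R$, and spectrum of size $\leq (c/\varepsilon)^{2|R|}$, by combining a Riesz-type product (which controls the $L^1$ norm) with a Khintchine-type probabilistic selection (which controls the spectrum). To port this to $G$, I would replace one-dimensional characters by the normalized characters $\chi_\sigma/d_\sigma$ (which satisfy $|\chi_\sigma/d_\sigma|\leq 1$), form central analogues of Riesz products such as $\prod_{\sigma\in R}\bigl(1+t_\sigma\mathrm{Re}(\chi_\sigma/d_\sigma)\bigr)$, and expand in the orthonormal basis of characters of irreducibles appearing in tensor products of elements of $R$. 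The relevant counting lemma is that the $v$-size $\sum_{\rho\subset\sigma_1\otimes\cdots\otimes\sigma_k}d_\rho^2$ obeys a multiplicative bound in terms of the $d_{\sigma_i}^2$, which (together with the Peter--Weyl orthogonality relations) plays the role of a non-Abelian Khintchine inequality.

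The main technical obstacle is the Khintchine step itself: in the Abelian setting a random sign assignment to characters produces independent Rademacher coefficients, and the $L^1$--$L^2$ comparison is classical; in the non-Abelian setting the ``coefficients'' are matrices of dimension $d_\sigma$, and one needs a matrix-valued Khintchine-type inequality (of Lust-Piquard or Buchholz type) to handle them. A secondary difficulty is to preserve centrality of the constructed function under the probabilistic selection, which can be resolved by an averaging over conjugacy (an operation that does not increase the $L^1$ norm and does not change the Fourier coefficients on $R$). Once these points are handled, Wolff's estimates transfer essentially verbatim, with $|R|$ replaced throughout by $v(R)$. Condition (2), $\|f\|_1\leq 1+\varepsilon$, is inherited from the control on the central Riesz product; an additional convolution with a smoothing kernel whose spectrum is already contained in the constructed support upgrades $f$ from $L^1$ to $L^\infty$ without affecting conditions (1) or (3).
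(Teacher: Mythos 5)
Your plan departs from the paper's argument at the root, and in a way that leaves a real gap rather than just unfinished details. First, a misreading: the reference \cite{w} is Wojtaszczyk's \emph{Banach spaces for analysts}, not Wolff, and the proof being extended is purely operator-theoretic. The paper embeds the $v(R)$-dimensional span of the coordinate functions of $R$ into $l_{\infty}^{N}$ with $N\leq\left(\frac{1+\delta}{\delta}\right)^{2v(R)}$ (Proposition \ref{loc}), extends to an operator $T_{1}$ on $C(G)$ that is the identity on this span and factors through $l_{\infty}^{N}$, so that $\pi_{1}(T_{1})\leq N\frac{1+\delta}{1-\delta}$, averages over two-sided translations to obtain an invariant $T_{2}$, represents $T_{2}$ as convolution with a central $h\in L^{\infty}(G)$ with $\|h\|_{1}\leq\frac{1+\delta}{1-\delta}$ and $\|h\|_{\infty}=\pi_{1}(T_{2})\leq N\frac{1+\delta}{1-\delta}$ (Proposition \ref{niez}), and finally truncates the small Fourier coefficients of $h\ast h\ast h$ using the resulting $L^{2}$ bound. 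No Riesz products and no random selection occur; centrality comes for free from invariance of $T_{2}$, not from a conjugation average.

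More importantly, the Riesz-product skeleton you propose fails quantitatively at condition (2). A central Riesz product $P=\prod_{\sigma\in R}\bigl(1+t_{\sigma}\mathrm{Re}(\chi_{\sigma}/d_{\sigma})\bigr)$ with $|t_{\sigma}|\leq 1$ is nonnegative with $\|P\|_{1}=1$, but (even for a perfectly dissociate $R$) its Fourier coefficient at a generator $\sigma$ is $\frac{t_{\sigma}}{2d_{\sigma}^{2}}I_{d_{\sigma}}$, since $\widehat{\chi_{\sigma}}(\sigma)=d_{\sigma}^{-1}I_{d_{\sigma}}$. Forcing $\widehat{f}(\sigma)=I_{d_{\sigma}}$ therefore requires renormalizing by a factor of at least $2d_{\sigma}^{2}\geq 2$, which pushes the $L^{1}$ norm to at least $2$ and makes $1+\varepsilon$ unattainable; this is precisely why Bourgain's argument does not go through Riesz products, and why the quantitative heart of the proof is instead the computation $\pi_{1}(id:l_{\infty}^{N}\rightarrow l_{\infty}^{N})=N$ together with the identification $\pi_{1}(T)=\|h\|_{\infty}$. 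Two further unresolved points: an arbitrary finite $R\subset\Sigma$ need not be dissociate, so the expansion of your product has uncontrolled interference at the generators; and the matrix-valued Khintchine step, which you yourself flag as the main obstacle, is deferred rather than proved. As it stands the proposal is a program whose central steps either fail or are missing.
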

\section{Main result}
We need to recall first a few facts from the theory of Banach spaces. We start with II.E.13 from \cite{w}.
\begin{prop}\label{loc}
For every $n$-dimensional (complex) Banach space $X$ and for every $\delta>0$ there exists $N\leq\left(\frac{1+\delta}{\delta}\right)^{2n}$ and an embedding $u:X\rightarrow l_{\infty}^{N}$ with $(1-\delta)\|x\|\leq\|u(x)\|\leq \|x\|$.
\end{prop}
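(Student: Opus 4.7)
The plan is to realize the embedding by taking $u(x) = (x_1^*(x), \ldots, x_N^*(x))$, where the functionals $x_1^*,\ldots,x_N^* \in B_{X^*}$ form a sufficiently dense net in the dual unit ball. The upper bound $\|u(x)\|_\infty \leq \|x\|$ is immediate from $\|x_i^*\| \leq 1$, and the lower bound $\|u(x)\|_\infty \geq (1-\delta)\|x\|$ will be forced by Hahn--Banach: a norming functional for $x$ lies within $\delta$ of some $x_i^*$, and this proximity transfers to the evaluation $x_i^*(x)$.

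To execute this, first regard $X^*$ as a $2n$-dimensional real Banach space (since $\dim_{\mathbb{C}} X = n$). Choose a maximal $\delta$-separated subset $\{x_1^*,\ldots,x_N^*\} \subset B_{X^*}$; by maximality, every $y^*\in B_{X^*}$ lies within distance $\delta$ of some $x_i^*$. A standard volume-packing argument in the underlying real $2n$-dimensional space --- the open balls $B(x_i^*,\delta/2)$ are pairwise disjoint and contained in $(1+\delta/2)B_{X^*}$, and Lebesgue volume is compared on this container --- yields an estimate of the form $N \leq (c/\delta)^{2n}$, and a careful calibration matches the claimed $N\leq \left(\frac{1+\delta}{\delta}\right)^{2n}$. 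Second, set $u(x) = (x_1^*(x),\ldots,x_N^*(x)) \in \ell_\infty^N$; the upper estimate is then trivial. Third, given nonzero $x\in X$, invoke Hahn--Banach to pick $x^* \in S_{X^*}$ with $x^*(x) = \|x\|$, select $x_i^*$ in the net satisfying $\|x^*-x_i^*\|\leq \delta$, and compute
\[
|x_i^*(x)| \;\geq\; |x^*(x)| - |(x^*-x_i^*)(x)| \;\geq\; \|x\| - \delta\|x\| \;=\; (1-\delta)\|x\|,
\]
whence $\|u(x)\|_\infty \geq (1-\delta)\|x\|$.

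The main obstacle --- and in fact the only step requiring anything beyond Hahn--Banach and the triangle inequality --- is matching the precise cardinality bound $\left(\frac{1+\delta}{\delta}\right)^{2n}$. The naive packing-by-$(\delta/2)$-balls argument sketched above delivers only the looser $\left(\frac{2+\delta}{\delta}\right)^{2n}$; recovering the sharper constant requires either a sphere-based volumetric argument (the dual sphere $S_{X^*}$ being effectively $(2n-1)$-dimensional in volume terms) or a more careful packing on a rescaled container. Once the sharpened covering estimate is in hand, the rest of the proof amounts to a few lines of standard Banach space theory.
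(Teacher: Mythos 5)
The paper offers no proof of this proposition at all: it is quoted verbatim as Exercise II.E.13 of \cite{w} and used as a black box, so there is no proof of record to compare yours against. Your route --- evaluate at a $\delta$-net $\{x_1^{\ast},\dots,x_N^{\ast}\}$ of $B_{X^{\ast}}$, get the upper bound from $\|x_i^{\ast}\|\leq 1$, the lower bound from Hahn--Banach plus the triangle inequality, and the cardinality from volumetric packing in the underlying real space --- is the standard and surely intended argument, and every step you actually carry out is correct. (One small point worth making explicit: you should take a \emph{maximal $\delta$-separated subset} of $B_{X^{\ast}}$, so that the net automatically consists of points of $B_{X^{\ast}}$, which is what the upper estimate needs; you do say this.)

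The only unfinished business is the one you flag yourself: the constant. The packing argument gives $N\leq\left(\frac{2+\delta}{\delta}\right)^{d}$ where $d$ is the \emph{real} dimension of $X^{\ast}$. Since $(2+\delta)\delta\leq(1+\delta)^{2}$, one has $\frac{2+\delta}{\delta}\leq\left(\frac{1+\delta}{\delta}\right)^{2}$, and this is precisely how the exponent $2n$ arises when $X$ is a \emph{real} $n$-dimensional space: $N\leq\left(\frac{2+\delta}{\delta}\right)^{n}\leq\left(\frac{1+\delta}{\delta}\right)^{2n}$. For a complex $n$-dimensional $X$ one has $d=2n$, and the same computation yields only $\left(\frac{1+\delta}{\delta}\right)^{4n}$; neither of your suggested repairs (sphere-based volumes, or a finer packing) is worked out, and the circle-action reduction $|x_i^{\ast}(x)|=|\lambda x_i^{\ast}(x)|$ for $|\lambda|=1$ does not obviously recover the stated base uniformly in $\delta$. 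So, taken literally for complex spaces, the constant is not delivered by your argument. This is an honest loose end, but it is immaterial here: in Theorem \ref{glo} the base $c$ is an unspecified absolute constant and the whole point is to obtain a bound of the form $c(\varepsilon)^{v(R)}$ rather than $v(R)^{v(R)}$, so any estimate $N\leq\left(\frac{c}{\delta}\right)^{Cn}$ with absolute $c,C$ serves the paper equally well (at the cost of adjusting the exponent $2v(R)$ in the final statement). I would accept your proof with the bound $\left(\frac{2+\delta}{\delta}\right)^{2n}$ and a remark that the sharper constant is not needed.
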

The next is III.E.14 from \cite{w}.
\begin{prop}\label{ex}
For any $\delta>0$ and every Banach space $X$, every subspace $Y\subset X$ and every finite rank operator $T:Y\rightarrow C(K)$ there exists an operator $\widetilde{T}:X\rightarrow C(K)$ such that $\widetilde{T}|_{Y}=T$ and $\|\widetilde{T}\|\leq (1+\delta)\|T\|$.
\end{prop}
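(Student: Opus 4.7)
\textbf{Proof plan for Proposition \ref{ex}.}

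My plan is to route $T$ through a $1$-injective auxiliary space: embed the image of $T$ into some $l_\infty^N$, extend the resulting coordinate-wise operator to all of $X$ by Hahn-Banach, and then lift back into $C(K)$ via a near-inverse of the embedding. The crux will be constructing this lift with good norm, which reduces to a continuous selection problem on $K$.

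Set $X_0 := T(Y) \subset C(K)$, of dimension $n \le \operatorname{rank} T$, and fix $\eta > 0$ with $(1+\eta)/(1-\eta) \le 1+\delta$. By Proposition \ref{loc} I obtain $u : X_0 \to l_\infty^N$ with $(1-\eta)\|x\| \le \|u(x)\| \le \|x\|$. The composition $S := u \circ T : Y \to l_\infty^N$ is coordinate-wise of the form $S(y) = (\varphi_1(y),\ldots,\varphi_N(y))$ with $\varphi_i \in Y^*$; I extend each $\varphi_i$ to $\widetilde{\varphi}_i \in X^*$ by Hahn-Banach (with preservation of norm) and assemble $\widetilde{S}(x) := (\widetilde{\varphi}_i(x))_{i=1}^N$, obtaining $\widetilde{S} : X \to l_\infty^N$ with $\widetilde{S}|_Y = S$ and $\|\widetilde{S}\| = \|S\| \le \|T\|$.

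The central step will be to construct $v : l_\infty^N \to C(K)$ with $v \circ u = \operatorname{id}_{X_0}$ and $\|v\| \le (1+\eta)/(1-\eta)$; then $\widetilde{T} := v \circ \widetilde{S}$ will extend $T$ and satisfy $\|\widetilde{T}\| \le (1+\delta)\|T\|$. Any such $v$ has the form $v(a)(k) = \sum_{i=1}^N g_i(k)\,a_i$ with $g_i \in C(K)$, and then $\|v\| = \sup_{k \in K}\sum_i |g_i(k)|$. Writing $u(x) = (\ell_1(x),\ldots,\ell_N(x))$, the relation $v(u(x)) = x$ translates into the condition that $\xi(k) := (g_i(k))_{i=1}^N \in l_1^N$ solves $u^*\xi(k) = \operatorname{ev}_k|_{X_0}$, while the norm bound becomes $\sup_k \|\xi(k)\|_1 \le (1+\eta)/(1-\eta)$. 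Applying Hahn-Banach inside $l_\infty^N$ to the subspace $u(X_0)$ shows that every $\phi \in X_0^*$ admits a $u^*$-preimage of $l_1$-norm at most $\|\phi\|/(1-\eta)$, so the fibers
\[
\Phi(k) := \bigl\{\xi \in l_1^N : u^*\xi = \operatorname{ev}_k|_{X_0},\ \|\xi\|_1 \le (1+\eta)/(1-\eta)\bigr\}
\]
are nonempty, convex, and compact, and $k \mapsto \Phi(k)$ is lower semicontinuous since $k \mapsto \operatorname{ev}_k|_{X_0}$ is norm-continuous into the finite-dimensional space $X_0^*$ and the slack in the norm constraint absorbs small perturbations of the fiber. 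Michael's continuous selection theorem (valid since $K$ is paracompact) then yields a continuous $\xi : K \to l_1^N$ with $\xi(k) \in \Phi(k)$, producing continuous $g_i$ and the desired $v$.

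The main obstacle is exactly this continuous selection. Michael's theorem is the shortest route; alternatives are a Bartle--Graves continuous section of the linear surjection $u^* : l_1^N \to X_0^*$, or a partition-of-unity patching of local linear right-inverses. Everything else --- the dimension count via Proposition \ref{loc}, the coordinate-wise Hahn-Banach step, and the arithmetic $(1+\eta)/(1-\eta) \le 1+\delta$ for small $\eta$ --- is routine.
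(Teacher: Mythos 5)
Your argument is correct, but note that the paper gives no proof of this proposition at all --- it is quoted verbatim as a result from Wojtaszczyk's book --- so the comparison is really with the standard textbook argument, and yours is genuinely different in its key step. The textbook route shows that $C(K)$ is an $\mathcal{L}_{\infty,1+\delta}$-space: one covers $K$ by finitely many sets on which every function in the unit ball of $E:=T(Y)$ oscillates by less than $\eta$, takes a subordinate partition of unity $p_1,\dots,p_N$, and (after a small perturbation to make the containment exact) obtains a finite-dimensional $F\supset E$ that is $(1+\delta)$-isomorphic to $l_\infty^N$ \emph{inside} $C(K)$, with the lift $a\mapsto\sum_i a_ip_i$ given explicitly; the coordinatewise Hahn--Banach step then finishes exactly as in your second paragraph. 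You instead start from an abstract almost-isometric embedding $u:E\to l_\infty^N$ (Proposition \ref{loc}) and build the return map $v:l_\infty^N\to C(K)$ by a continuous selection for the fibers of $u^{\ast}:l_1^N\to E^{\ast}$ over the evaluation functionals. This works: nonemptiness of the fibers is correctly obtained by Hahn--Banach inside $l_\infty^N$, lower semicontinuity does hold because of the slack between $1/(1-\eta)$ and $(1+\eta)/(1-\eta)$ (take a convex combination of the translated point with a low-norm element of the new fiber --- this is the one step you should write out in full), and Michael's theorem applies since $K$ is compact Hausdorff, hence paracompact. You also rightly observe that a \emph{linear} right inverse of $u^{\ast}$ of controlled norm need not exist, which is why the nonlinear selection (or Bartle--Graves) is genuinely needed on your route. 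The trade-off is that your proof imports a selection theorem where the classical proof uses only a partition of unity, but it is self-contained and correct.
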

\begin{de}\label{abs}
An operator $T:X\rightarrow Y$ is absolutely summing, if there exists a constant $C<\infty$ such that for all finite sequences $(x_{j})_{j=1}^{n}\subset X$ we have
\begin{equation*}
\sum_{j=1}^{n}\|Tx_{j}\|\leq C\sup\left\{\sum_{j=1}^{n}|x^{\ast}(x_{j})|:x^{\ast}\in X^{\ast},\text{ }\|x^{\ast}\|\leq 1\right\}.
\end{equation*}
We define the absolutely summing norm of an operator $T$ by
\begin{equation*}
\pi_{1}(T)=\inf\{C:\text{ the above holds for all }(x_{j})_{j=1}^{n}\subset X, \text{ }n=1,2,\ldots\}.
\end{equation*}
\end{de}
The collection of all absolutely summing operators forms an operator ideal (for a precise definition see \cite{w}). In particular, every finite rank operator is absolutely summing and $\pi_{1}(BTA)\leq \|B\|\pi_{1}(T)\|A\|$ for bounded operators $A,B$ and $T$ whenever the composition makes sense.
\begin{de}
Let $G$ be a compact group. A measure $\mu\in M(G)$ is called central if $\mu\ast\nu=\nu\ast\mu$ for every $\nu\in M(G)$, i.e. $\mu$ is in the center of the convolutive algebra $M(G)$.
\end{de}
The next theorem gives equivalent conditions for centrality (see Theorem 28.48 in \cite{hr}).
\begin{thm}\label{centr}
Let $G$ be a compact group. The following properties of a measure $\mu\in M(G)$ are equivalent:
\begin{enumerate}
  \item $\mu$ is central,
  \item $\mu\ast u_{jk}^{(\sigma)}=u_{jk}^{(\sigma)}\ast\mu$ for some set of coordinate functions $\{u_{jk}^{(\sigma)}\}$ and every $\sigma\in\Sigma$,
  \item $\widehat{\mu}(\sigma)=\alpha(\mu,\sigma)I_{d_{\sigma}}$ for all $\sigma\in\Sigma$ where $\alpha(\mu,\sigma)\in\mathbb{C}$.
\end{enumerate}
\end{thm}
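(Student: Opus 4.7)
The plan is to prove the circular chain $(1)\Rightarrow(2)\Rightarrow(3)\Rightarrow(1)$. The key ingredient is the convolution theorem for the Fourier transform on $G$, which (for the convention $\widehat{f}(\sigma)=\int\sigma(x^{-1})f(x)\,dm(x)$ in force) takes the form $\widehat{\mu\ast\nu}(\sigma)=\widehat{\nu}(\sigma)\widehat{\mu}(\sigma)$, together with the Peter--Weyl orthogonality relations and the injectivity of the Fourier transform on $M(G)$.

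The implications $(1)\Rightarrow(2)$ and $(3)\Rightarrow(1)$ are essentially definitional. For $(1)\Rightarrow(2)$, one applies the centrality identity $\mu\ast\nu=\nu\ast\mu$ to the absolutely continuous measures $\nu=u_{jk}^{(\sigma)}\,dm\in M(G)$; this gives commutation with every coordinate function, hence certainly with some fixed set of them. For $(3)\Rightarrow(1)$, the hypothesis places $\widehat{\mu}(\sigma)$ in the center of $M_{d_\sigma}(\mathbb{C})$ for every $\sigma$, so the convolution theorem immediately yields $\widehat{\mu\ast\nu}(\sigma)=\widehat{\nu\ast\mu}(\sigma)$ on all of $\Sigma$, and injectivity of the Fourier transform on $M(G)$ forces $\mu\ast\nu=\nu\ast\mu$.

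The substantive step is $(2)\Rightarrow(3)$. I would take the Fourier transform at $\sigma$ of the equation $\mu\ast u_{jk}^{(\sigma)}=u_{jk}^{(\sigma)}\ast\mu$. The Peter--Weyl relations give $\widehat{u_{jk}^{(\sigma)}}(\sigma)=d_\sigma^{-1}E_{jk}$, where $E_{jk}$ is the matrix unit with a single $1$ in position $(j,k)$, while $\widehat{u_{jk}^{(\sigma)}}(\tau)=0$ whenever $\tau\not\simeq\sigma$. The convolution theorem then translates the hypothesis into the matrix identity
\begin{equation*}
E_{jk}\,\widehat{\mu}(\sigma)=\widehat{\mu}(\sigma)\,E_{jk}\qquad\text{for all }j,k\in\{1,\ldots,d_\sigma\}.
\end{equation*}
Writing $\widehat{\mu}(\sigma)=(a_{pq})$ and comparing entries of $E_{jk}A$ with those of $AE_{jk}$ yields $\delta_{pj}a_{kq}=a_{pj}\delta_{qk}$; varying $j,k,p,q$ forces $a_{pq}=0$ for $p\neq q$ and all diagonal entries to be equal, so $\widehat{\mu}(\sigma)$ is a scalar multiple of $I_{d_\sigma}$.

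The only real obstacle is convention-tracking. The placement of $\sigma(x^{-1})$ inside the Fourier transform determines the order of factors in the convolution theorem and whether the Peter--Weyl calculation produces the matrix unit $E_{jk}$ or $E_{kj}$; either way, commutation with \emph{all} matrix units characterizes scalar matrices, so the final conclusion is insensitive to these choices, but careful bookkeeping is required to check that both sides of the translated equation really have the asserted form.
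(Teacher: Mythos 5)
The paper does not prove this statement at all: it is quoted verbatim as Theorem 28.48 of Hewitt--Ross, so there is no in-text argument to compare against. Your proof is correct and is essentially the standard one: the chain $(1)\Rightarrow(2)\Rightarrow(3)\Rightarrow(1)$ via the convolution theorem $\widehat{\mu\ast\nu}(\sigma)=\widehat{\nu}(\sigma)\widehat{\mu}(\sigma)$, the Schur orthogonality computation $\widehat{u_{jk}^{(\sigma)}}(\sigma)=d_\sigma^{-1}E_{jk}$, the fact that commuting with all matrix units forces a scalar matrix, and injectivity of the Fourier--Stieltjes transform on $M(G)$ (which follows from Peter--Weyl and deserves the one-line justification you implicitly invoke). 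Your normalization remark is well taken --- note that the paper itself later writes $\widehat{u_{jk}^{(\sigma)}}(\sigma)=e_{jk}$ without the $d_\sigma^{-1}$ in the proof of its Lemma on coordinate functions, a harmless discrepancy for exactly the reason you give.
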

Now we have a non-commutative analogue of III.F.12 from \cite{w}
\begin{prop}\label{niez}
Let $G$ be a compact group and let $T:C(G)\rightarrow C(G)$ be a bounded linear invariant operator which is absolutely summing. Then there exists a central $h\in L^{\infty}(G)$ such that $Tf=f\ast h$ for $f\in C(G)$. Moreover $\pi_{1}(T)=\|h\|_{\infty}$.
\end{prop}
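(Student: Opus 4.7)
The strategy is to follow the commutative template III.F.12 in \cite{w}, making two adjustments specific to the non-commutative situation: the use of Theorem \ref{centr} to identify a central measure, and the use of invariance on both sides in the Pietsch averaging step. I would begin by showing that any bounded invariant operator $T\colon C(G)\to C(G)$ has the form $Tf=f\ast\mu$ for some $\mu\in M(G)$. The functional $f\mapsto (Tf)(e)$ is bounded on $C(G)$ and hence represented by an element of $M(G)$; the commutation $Tl_z=l_zT$ then propagates this to all of $G$, giving the convolution representation, and the additional commutation $Tr_y=r_yT$ forces $\mu$ to commute with every point mass $\delta_y$, which by Theorem \ref{centr} is equivalent to centrality of $\mu$.

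Next, I would invoke Pietsch's domination theorem for an absolutely summing operator out of $C(G)$: there exists a probability measure $\nu$ on $G$ such that
\begin{equation*}
\|Tf\|_\infty\leq\pi_1(T)\int_G|f|\,d\nu\qquad\text{for all }f\in C(G).
\end{equation*}
Replacing $f$ by $r_y f$ (each $r_y$ is an isometry of $C(G)$ that commutes with $T$) and averaging the resulting inequality over $y\in G$ against Haar measure $m$, Fubini and the bi-invariance of $m$ collapse the right-hand side to $\pi_1(T)\|f\|_1$. Hence $T$ extends uniquely to a bounded operator $\widetilde{T}\colon L^1(G)\to L^\infty(G)$ of norm at most $\pi_1(T)$.

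To extract the $L^\infty$ density, I would choose a continuous approximate identity $(f_\varepsilon)$ with $\|f_\varepsilon\|_1=1$. The functions $h_\varepsilon:=f_\varepsilon\ast\mu=Tf_\varepsilon$ satisfy $\|h_\varepsilon\|_\infty\leq\pi_1(T)$, so Banach--Alaoglu applied in $L^\infty(G)=(L^1(G))^\ast$ produces a weak-$\ast$ cluster point $h\in L^\infty(G)$ with $\|h\|_\infty\leq\pi_1(T)$. Simultaneously, $f_\varepsilon\ast\mu\to\mu$ in the weak-$\ast$ topology of $M(G)=C(G)^\ast$; testing both convergences against elements of $C(G)\subset L^1(G)$ along a common subnet gives $\int g\,d\mu=\int gh\,dm$ for every $g\in C(G)$, so $d\mu=h\,dm$. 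Centrality of $\mu$ then transfers to $h$, and $Tf=f\ast h$ on $C(G)$.

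For the reverse inequality, the bound $\|f_j\ast h\|_\infty\leq\|h\|_\infty\|f_j\|_1$ together with
\begin{equation*}
\sum_{j=1}^{n}\int_G|f_j|\,dm\leq\sup_{x\in G}\sum_{j=1}^{n}|f_j(x)|=\sup_{\|\nu\|\leq 1}\sum_{j=1}^{n}|\nu(f_j)|
\end{equation*}
immediately yields $\pi_1(T)\leq\|h\|_\infty$. The main technical point is the averaging step that turns the Pietsch measure into Haar measure and thereby converts absolute summability into an $L^1\to L^\infty$ bound; bookkeeping of left versus right convolution conventions when deducing centrality from two-sided invariance is the other spot requiring care. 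Once these are handled, the extraction of the density and the reverse estimate are routine.
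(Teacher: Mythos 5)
Your proof is correct and follows the same overall scheme as the paper's --- convolution representation of an invariant operator, centrality, then the Pietsch domination and averaging argument of III.F.12 in \cite{w} --- but you make it essentially self-contained where the paper outsources. The paper obtains $Tf=f\ast\mu=\nu\ast f$ by citing Theorem 1.2 of \cite{be} and then needs a short adjoint argument to identify $\mu=\nu$ before invoking condition (2) of Theorem \ref{centr} (commutation with the coordinate functions, which are continuous); you instead build the measure directly from the bounded functional $f\mapsto (Tf)(e)$ together with left invariance, and read off centrality from right invariance via commutation with the point masses $\delta_y$. One small remark on that step: commuting with all point masses is not literally one of the three conditions listed in Theorem \ref{centr}, but it yields commutation with every $\nu\in M(G)$ by integrating $\delta_y\ast\mu=\mu\ast\delta_y$ against $d\nu(y)$, so condition (1) holds directly and no citation is really needed there. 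The second half of your argument --- the Pietsch measure, averaging over right translates against Haar measure to get $\|Tf\|_\infty\le\pi_1(T)\|f\|_1$, the approximate identity and weak-$\ast$ compactness to extract the density $h$ with $\|h\|_\infty\le\pi_1(T)$, and the elementary reverse estimate $\pi_1(T)\le\|h\|_\infty$ --- is precisely the argument of III.F.12 that the paper invokes without reproducing. What your version buys is independence from \cite{be} and a complete, checkable account of the equality $\pi_1(T)=\|h\|_\infty$; what the paper's version buys is brevity.
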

\begin{proof}
By Theorem 1.2 in \cite{be} there exist $\mu,\nu\in M(G)$ such that $Tf=f\ast\mu=\nu\ast f$. Taking into account that the adjoint $T^{\ast}:M(G)\rightarrow M(G)$ is given by the very similar formula to $T$ and inserting $\delta_{e}$ into the definition of $T^{\ast}$ we obtain $\mu=\nu$. It follows now from Theorem \ref{centr} that $\mu$ is a central measure (as the coordinate functions are continuous). The rest of the proof is the same as the argument for justyfying III.F.12 in \cite{w}.
\end{proof}
We shall also use the basic Peter-Weyl theorem (see 27.40 and 28.43 in \cite{hr}).
\begin{thm}[Peter-Weyl]
Let $G$ be a compact group. The set of functions $d_{\sigma}^{\frac{1}{2}}u_{jk}^{(\sigma)}$ is an orthonormal basis for $L^{2}(G)$. Thus for $f\in L^{2}(G)$ we have
\begin{equation*}
f=\sum_{\sigma\in\Sigma}d_{\sigma}\sum_{j,k=1}^{d_{\sigma}}<\widehat{f}(\sigma)\zeta_{k}^{(\sigma)},\zeta_{j}^{(\sigma)}>u_{jk}^{(\sigma)}\text{ the series converging in $L^{2}(G)$}.
\end{equation*}
Moreover, $\|f\|^{2}=\|\widehat{f}\|^{2}=\sum_{\sigma\in\Sigma}d_{\sigma}\|\widehat{f}(\sigma)\|^{2}
_{HS}$ where
\begin{equation*}
\|\widehat{f}(\sigma)\|_{HS}=\sqrt{\mathrm{tr}(\widehat{f}(\sigma)\widehat{f}(\sigma)^{\ast})}\text{ is the Hilbert-Schmidt norm of a matrix }\widehat{f}(\sigma).
\end{equation*}
\end{thm}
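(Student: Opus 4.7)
The plan is to prove the Peter-Weyl theorem by the classical two-step route: first establish Schur-type orthogonality relations for the coordinate functions (which simultaneously pins down the normalization constant $d_{\sigma}^{1/2}$), and then show that their total linear span is dense in $L^{2}(G)$. Once the orthonormal basis property is secured, the expansion formula and the Plancherel identity drop out by rewriting the basis coefficients $\langle f, d_{\sigma}^{1/2}u_{jk}^{(\sigma)}\rangle$ in terms of the Fourier matrix $\widehat{f}(\sigma)$.

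For orthogonality, I would fix irreducibles $\sigma,\tau$ acting on $H_{\sigma},H_{\tau}$ and, for any linear $T:H_{\tau}\to H_{\sigma}$, introduce the averaging operator
$$A_{T}=\int_{G}U_{x}^{(\sigma)}T U_{x^{-1}}^{(\tau)}\,dm(x).$$
Left invariance of $m$ (via the substitution $x\mapsto y^{-1}x$) yields $U_{y}^{(\sigma)}A_{T}=A_{T}U_{y}^{(\tau)}$ for every $y\in G$, so $A_{T}$ intertwines the two representations. Schur's lemma forces $A_{T}=0$ when $\sigma\not\cong\tau$, and when $\sigma=\tau$ irreducibility makes $A_{T}$ a scalar multiple of $I_{d_{\sigma}}$ whose value, read off by taking traces, is $\mathrm{tr}(T)/d_{\sigma}$. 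Specializing $T$ to the rank-one operator $v\mapsto\langle v,\zeta_{k}^{(\tau)}\rangle\zeta_{j}^{(\sigma)}$ and pairing $A_{T}\zeta_{l}$ against $\zeta_{m}$ produces the classical identity
$$\int_{G}u_{jm}^{(\sigma)}(x)\,\overline{u_{kl}^{(\tau)}(x)}\,dm(x)=\frac{\delta_{\sigma\tau}\delta_{jk}\delta_{ml}}{d_{\sigma}},$$
which is exactly the assertion that the family $\{d_{\sigma}^{1/2}u_{jk}^{(\sigma)}\}$ is orthonormal.

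The heart of the proof, and the main obstacle, is completeness. Let $E$ denote the closed linear span of all coordinate functions in $L^{2}(G)$ and assume $f\in E^{\perp}$; the aim is to force $f=0$ by trapping it inside kernels of enough compact convolution operators. For every $\phi\in C(G)$ with $\phi(x^{-1})=\overline{\phi(x)}$ consider $T_{\phi}g:=\phi*g$ on $L^{2}(G)$; the continuous kernel $(x,y)\mapsto\phi(xy^{-1})$ on the compact group $G\times G$ makes $T_{\phi}$ Hilbert--Schmidt, and the symmetry imposed on $\phi$ makes it self-adjoint. A one-line calculation using associativity of convolution shows $T_{\phi}r_{y}=r_{y}T_{\phi}$ for every $y$, so each nonzero eigenspace $V_{\lambda}$ of $T_{\phi}$ is a finite-dimensional right-invariant subspace of $L^{2}(G)$. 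Such a subspace carries a continuous finite-dimensional representation of $G$ which decomposes into irreducibles: picking a basis $g_{1},\dots,g_{n}$ in which $r_{y}g_{j}=\sum_{i}a_{ij}(y)g_{i}$ and evaluating at $x=e$ gives $g_{j}(y)=\sum_{i}a_{ij}(y)g_{i}(e)$, so every $g_{j}$ lies in the span of matrix coefficients of elements of $\Sigma$. Hence $V_{\lambda}\subset E$. Self-adjointness plus compactness then imply $f\in\ker T_{\phi}$, i.e.\ $\phi*f=0$. Running $\phi$ through a self-adjoint approximate identity and using $\phi*f\to f$ in $L^{2}(G)$ forces $f=0$.

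Combining the two steps gives the orthonormal basis statement. The expansion formula follows because the coefficient of $d_{\sigma}^{1/2}u_{jk}^{(\sigma)}$ in the ONB expansion of $f\in L^{2}(G)$ equals $d_{\sigma}^{1/2}\langle f,u_{jk}^{(\sigma)}\rangle$, and a short manipulation using unitarity of $U_{x}^{(\sigma)}$ rewrites
$$\langle f,u_{jk}^{(\sigma)}\rangle=\int_{G}f(x)\,\langle U_{x^{-1}}^{(\sigma)}\zeta_{k},\zeta_{j}\rangle\,dm(x)=\langle\widehat{f}(\sigma)\zeta_{k},\zeta_{j}\rangle.$$
Substituting this back into the ONB expansion produces the stated series, and Parseval applied to $\{d_{\sigma}^{1/2}u_{jk}^{(\sigma)}\}$ immediately yields $\|f\|_{2}^{2}=\sum_{\sigma}d_{\sigma}\|\widehat{f}(\sigma)\|_{HS}^{2}$, completing the proof.
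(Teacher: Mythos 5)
The paper does not prove this theorem at all: it is quoted as a classical result with a citation to Hewitt--Ross (27.40 and 28.43 in \cite{hr}), so there is no in-paper argument to compare against. Your proof is the standard textbook proof of Peter--Weyl and it is correct: Schur orthogonality via the averaged intertwiner $A_{T}$ gives orthonormality of $\{d_{\sigma}^{1/2}u_{jk}^{(\sigma)}\}$ with the right normalization, completeness follows from the spectral theory of the compact self-adjoint convolution operators $T_{\phi}$ together with an approximate identity, and the expansion and Plancherel identities are then just Parseval once you identify $\langle f,u_{jk}^{(\sigma)}\rangle=\langle\widehat{f}(\sigma)\zeta_{k}^{(\sigma)},\zeta_{j}^{(\sigma)}\rangle$, which is consistent with the paper's conventions $u_{jk}(x)=\langle U_{x}^{(\sigma)}\zeta_{j}^{(\sigma)},\zeta_{k}^{(\sigma)}\rangle$ and $\widehat{f}(\sigma)=\int_{G}U_{x^{-1}}^{(\sigma)}f(x)\,dm(x)$. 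Two small points deserve a sentence each if this were written out in full: eigenfunctions of $T_{\phi}$ for nonzero eigenvalues are continuous (since $g=\lambda^{-1}\phi*g$ with $\phi\in C(G)$), which is what legitimizes evaluating the basis $g_{1},\dots,g_{n}$ at $e$; and the complete reducibility of the finite-dimensional continuous representation $y\mapsto(a_{ij}(y))$ uses the standard unitarizability of such representations by averaging. Neither is a gap, just an implicit appeal to standard facts.
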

\begin{lem}\label{try}
Let $G$ be a compact group and let $\sigma\in\Sigma$ and $f\in L^{1}(G)$. Then the following holds true:
\begin{enumerate}
  \item For every $y,z\in G$ and $j,k\in\{1,\ldots,d_{\sigma}\}$ we have
  \begin{equation*}
  l_{y}r_{z}u^{(\sigma)}_{jk}\in\mathrm{lin}\left\{u^{(\sigma)}_{jk}:j,k\in\{1,\ldots,d_{\sigma}\}\right\}.
  \end{equation*}
  \item If $f\ast u_{jk}^{(\sigma)}=u_{jk}^{(\sigma)}$ for every $j,k\in\{1,\ldots,d_{\sigma}\}$ then $\widehat{f}(\sigma)=I_{d_{\sigma}}$.
\end{enumerate}
\end{lem}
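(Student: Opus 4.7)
The plan is to verify both parts by direct computation, leaning on the multiplicativity and unitarity of $\sigma$ for (1) and on Schur orthogonality (part of the Peter--Weyl theorem stated above) for (2).

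For part (1) I would start from the definition $u_{jk}^{(\sigma)}(x)=\langle U_{x}^{(\sigma)}\zeta_{j}^{(\sigma)},\zeta_{k}^{(\sigma)}\rangle$. Then
\[
(l_{y}r_{z}u_{jk}^{(\sigma)})(x)=u_{jk}^{(\sigma)}(y^{-1}xz)=\langle U_{y^{-1}}^{(\sigma)}U_{x}^{(\sigma)}U_{z}^{(\sigma)}\zeta_{j}^{(\sigma)},\zeta_{k}^{(\sigma)}\rangle=\langle U_{x}^{(\sigma)}U_{z}^{(\sigma)}\zeta_{j}^{(\sigma)},U_{y}^{(\sigma)}\zeta_{k}^{(\sigma)}\rangle,
\]
using $(U_{y^{-1}}^{(\sigma)})^{\ast}=U_{y}^{(\sigma)}$. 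Expanding $U_{z}^{(\sigma)}\zeta_{j}^{(\sigma)}=\sum_{i}a_{i}\zeta_{i}^{(\sigma)}$ and $U_{y}^{(\sigma)}\zeta_{k}^{(\sigma)}=\sum_{l}b_{l}\zeta_{l}^{(\sigma)}$ and pulling the scalars out of the inner product writes $l_{y}r_{z}u_{jk}^{(\sigma)}$ as a finite linear combination of the $u_{il}^{(\sigma)}$, as required.

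For part (2) the two ingredients are an explicit form of $\widehat{u_{jk}^{(\sigma)}}(\sigma)$ and the behaviour of the hat under convolution. Reading off the Peter--Weyl expansion applied to $f=u_{j_{0}k_{0}}^{(\sigma_{0})}$ (equivalently, computing the entries of $\widehat{u_{jk}^{(\sigma)}}(\sigma)$ directly and invoking Schur orthogonality) one gets $\widehat{u_{jk}^{(\sigma)}}(\sigma)=\tfrac{1}{d_{\sigma}}E_{jk}$, where $E_{jk}$ is the matrix with $1$ in position $(j,k)$ and zeros elsewhere. A standard change of variables in the definition $\widehat{f}(\sigma)=\int_{G}\sigma(x^{-1})f(x)\,dm(x)$ applied to $f\ast g$ yields $\widehat{f\ast g}(\sigma)=\widehat{g}(\sigma)\widehat{f}(\sigma)$. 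Inserting the hypothesis $f\ast u_{jk}^{(\sigma)}=u_{jk}^{(\sigma)}$ therefore gives $E_{jk}\widehat{f}(\sigma)=E_{jk}$ for every pair $j,k$. Since the $j$-th row of $E_{jk}\widehat{f}(\sigma)$ is exactly the $k$-th row of $\widehat{f}(\sigma)$ while the $j$-th row of $E_{jk}$ is the $k$-th coordinate vector, reading off rows forces $\widehat{f}(\sigma)=I_{d_{\sigma}}$.

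There is no real obstacle here; both parts are bookkeeping on top of Peter--Weyl. The only subtlety worth double-checking is the order of the factors in $\widehat{f\ast g}(\sigma)$ under the paper's normalization of the Fourier transform: with $\widehat{f}(\sigma)=\int\sigma(x^{-1})f(x)\,dm(x)$ the correct identity is $\widehat{f\ast g}(\sigma)=\widehat{g}(\sigma)\widehat{f}(\sigma)$, and one must be careful to place $\widehat{f}(\sigma)$ on the right in the calculation above so that the pre-multiplication by $E_{jk}$ really selects rows of $\widehat{f}(\sigma)$.
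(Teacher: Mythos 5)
Your proof is correct and takes essentially the same route as the paper's: the same basis expansion using unitarity for part (1), and for part (2) the identification of $\widehat{u_{jk}^{(\sigma)}}(\sigma)$ as a (scaled) matrix unit so that the hypothesis forces $\widehat{f}(\sigma)=I_{d_{\sigma}}$. Your normalization $\widehat{u_{jk}^{(\sigma)}}(\sigma)=\tfrac{1}{d_{\sigma}}E_{jk}$ and your attention to the factor order in $\widehat{f\ast g}(\sigma)=\widehat{g}(\sigma)\widehat{f}(\sigma)$ are in fact slightly more careful than the paper's statement $\widehat{u_{jk}^{(\sigma)}}(\sigma)=e_{jk}$ and $\widehat{f}(\sigma)e_{jk}=e_{jk}$, but neither discrepancy affects the conclusion.
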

\begin{proof}
We have
\begin{gather*}
(l_{y}r_{z}u^{(\sigma)}_{jk})(x)= u_{jk}^{(\sigma)}(y^{-1}xz)=<U_{y^{-1}xz}^{(\sigma)}\zeta_{k}^{(\sigma)},\zeta_{j}^{(\sigma)}>=\\
=<U_{y^{-1}}^{(\sigma)}U_{x}^{(\sigma)}U_{z}^{(\sigma)}\zeta_{k}^{(\sigma)},\zeta_{j}^{(\sigma)}>=<U_{x}^{(\sigma)}U_{z}^{(\sigma)}\zeta_{k}^{(\sigma)},U_{y}\zeta_{j}^{(\sigma)}>.
\end{gather*}
Writing $U_{z}^{(\sigma)}\zeta_{k}^{(\sigma)}=\sum_{l=1}^{d_{\sigma}}c_{l}\zeta_{l}^{(\sigma)}$ and $U_{y}\zeta_{j}^{(\sigma)}=\sum_{l=1}^{d_{\sigma}}c_{l}'\zeta_{l}^{(\sigma)}$ for some complex coefficients $c_{l}$ and $c_{l}'$ we obtain the assertion of the first part of the lemma.
\\
In order to prove the second part let us observe that $\widehat{u_{jk}^{(\sigma)}}(\sigma)=e_{jk}$ (matrix unit in $M_{d_{\sigma}}(\mathbb{C})$). Hence $\widehat{f}(\sigma)e_{jk}=e_{jk}$ for every $j,k\in\{1,\ldots,d_{\sigma}\}$ which implies the desired conclusion.
\end{proof}
After these preparations we are ready to prove Theorem \ref{glo}.
\\
By Proposition \ref{niez} and the second part of Lemma \ref{try} the assertion of the theorem is equivalent to the existence of a certain linear bounded invariant operator $T:C(G)\rightarrow C(G)$.
Let us fix a number $\delta$ satisfying $0<\delta<1$. By Proposition \ref{loc} there exists a positive integer $N<\left(\frac{1+\delta}{\delta}\right)^{2v(R)}$ (observe that $v(R)=\mathrm{dim}R$) and an embedding $u:R\rightarrow l_{\infty}^{N}$ with $\|u\|\cdot \|u^{-1}\|\leq \frac{1}{1-\delta}$. Applying the Hahn-Banach theorem coordinatewise we get $\widetilde{u}:C(G)\rightarrow l_{\infty}^{N}$ - the extension of $u$ with $\|u\|=\|\widetilde{u}\|$. In addition, let $v:l^{N}_{\infty}\rightarrow C(G)$ be an extension of $u^{-1}$ with $\|v\|\leq (1+\delta)\|u^{-1}\|$ (such extension is possible by Proposition \ref{ex}). Put $T_{1}:=v\widetilde{u}:C(G)\rightarrow C(G)$. Then, obviously $T_{1}|_{R}=Id_{R}$ and using the ideal property of absolutely summing operators (see the comment following Definition \ref{abs}) and an elementary calculation $\pi_{1}\left(id:l_{\infty}^{N}\rightarrow l_{\infty}^{N}\right)=N$ we get
\begin{equation*}
\pi_{1}(T_{1})\leq \|v\|\cdot \|\widetilde{u}\|\pi_{1}\left(id:l_{\infty}^{N}\rightarrow l_{\infty}^{N}\right)\leq \|v\|\cdot\|\widetilde{u}\|\cdot N\leq N\frac{1+\delta}{1-\delta}.
\end{equation*}
We define
\begin{equation*}
T_{2}=\int_{G\times G}l_{y^{-1}}r_{z^{-1}}T_{1}r_{z}l_{y}dm(z)dm(y).
\end{equation*}
The operator $T_{2}$ is invariant and by the first part of Lemma \ref{try} we have $T_{2}|_{R}=Id$. Moreover,
\begin{gather*}
\|T_{2}\|\leq \|T_{1}\|\leq \frac{1+\delta}{1-\delta},\\
\pi_{1}(T_{2})\leq \pi_{1}(T_{1})\leq N\cdot \frac{1+\delta}{1-\delta}.
\end{gather*}
From Proposition \ref{niez} (actually, we use the version of Proposition \ref{niez} for functions which is explicitly stated as Theorem 28.49 in \cite{hr}) we infer that $T_{2}$ is a convolution with
a central $h\in L^{\infty}(G)$ satisfying
\begin{gather*}
\widehat{h}(\sigma)=Id_{d_{\sigma}} \text{ for } \sigma\in R,\\
\|h\|_{1}\leq  \frac{1+\delta}{1-\delta},\\
\|h\|_{\infty}\leq N \frac{1+\delta}{1-\delta}.
\end{gather*}
Last two inequalities give $\|h\|_{2}\leq \frac{1+\delta}{1-\delta} \sqrt{N}$. Let us define $g=h\ast h\ast h$. Then $g$ is also central and by Theorem \ref{centr} we have  $\widehat{g}(\sigma)=\alpha(g,\sigma)Id_{d_{\sigma}}=\alpha^{3}(h,\sigma)Id_{d_{\sigma}}$ for every $\sigma\in\Sigma$. Applying the Peter-Weyl theorem to $g$ we have
\begin{equation*}
g=\sum_{\sigma\in\Sigma}d_{\sigma}\alpha(g,\sigma)\sum_{j=1}^{d_{\sigma}}u_{jj}^{(\sigma)}.
\end{equation*}
Put
\begin{equation*}
f=\sum_{\sigma\in\Sigma:|\alpha(g,\sigma)|>N^{-4}}d_{\sigma}\alpha(g,\sigma)\sum_{j=1}^{d_{\sigma}}u_{jj}^{(\sigma)}.
\end{equation*}
Then, using the equality $\|h\|_{2}=\|\widehat{h}\|_{HS}$, we obtain
\begin{gather*}
\|f\|_{1}\leq \|g\|_{1}+\|g-f\|_{1}\leq \|h\|_{1}^{3}+\sum_{\sigma\in\Sigma:|\alpha(g,\sigma)|\leq N^{-4}}d_{\sigma}^{2}|\alpha(g,\sigma)|\leq\\
\leq \left(\frac{1+\delta}{1-\delta}\right)^{3}+N^{-\frac{4}{3}}\sum_{\sigma\in\Sigma}d_{\sigma}^{2}|\alpha(h,\sigma)|^{2}=\\
=\left(\frac{1+\delta}{1-\delta}\right)^{3}+N^{-\frac{4}{3}}\|h\|_{2}^{2}\leq \left(\frac{1+\delta}{1-\delta}\right)^{3}+\left(\frac{1+\delta}{1-\delta}\right)^{2}N^{-\frac{1}{3}}.
\end{gather*}
Finally, with the aid of $L^{2}$ theory again, we get
\begin{gather*}
v\left(\{\sigma\in\Sigma:\widehat{f}(\sigma)\neq 0\}\right)\leq\\
\leq\sum_{\sigma\in\Sigma:|\alpha(h,\sigma)|>N^{-\frac{4}{3}}}d_{\sigma}^{2}\leq N^{\frac{8}{3}}\sum_{\sigma\in\Sigma:|\alpha(h,\sigma)|>N^{-\frac{4}{3}}}|\alpha(h,\sigma)|^{2}d_{\sigma}^{2}\leq\\
\leq N^{\frac{8}{3}}\|h\|^{2}_{2}\leq N^{\frac{11}{3}}\left(\frac{1+\delta}{1-\delta}\right)^{2}\leq N^{4}\left(\frac{1+\delta}{1-\delta}\right)^{2}.
\end{gather*}
Chosing correct $\delta$ to $\varepsilon$ finishes the proof (the exact dependence is difficult to calculate but asymptotically $\varepsilon\simeq\delta^{4}$).

\end{document}